\newtheorem{theorem}{Theorem}[section]
\newtheorem{lemma}[theorem]{Lemma}
\newtheorem{definition}[theorem]{Definition}
\newcommand{\bsa}{\boldsymbol{a}}
\newcommand{\bsb}{\boldsymbol{b}}
\newcommand{\bsc}{\boldsymbol{c}}
\newcommand{\bsx}{\boldsymbol{x}}
\newcommand{\bsy}{\boldsymbol{y}}
\newcommand{\bszero}{\boldsymbol{0}}
\newcommand{\Fcal}{\mathcal{F}}
\newcommand{\Jcal}{\mathcal{J}}
\newcommand{\Kcal}{\mathcal{K}}
\newcommand{\Pcal}{\mathcal{P}}
\def\bN{\mathbb N}
\def\bZ{{\mathbb Z}}
\def\bsx{{\boldsymbol{x}}}
\def\bsy{{\boldsymbol{y}}}
\def\bsa{{\boldsymbol{a}}}
\def\pr{{\mathrm{pr}}}
\def\id{{{\mathrm{id}}}}
\newcommand{\Lset}[1][m]{\mathcal{L}_{#1}}
\newcommand{\Uset}[1][m]{\mathcal{U}_{#1}}
\newcommand{\Ftwo}{\mathbb{F}_2}
\newcommand{\FtwoMat}{\mathbb{F}_2^{m \times m}}
\newenvironment{enuroman}{\begin{enumerate}[\normalfont (i)]}{\end{enumerate}}
\begin{document}

\title{
Characterization of matrices $B$
such that $(I,B,B^2)$ generates a digital net 
with $t$-value zero\thanks{H.K.\ is supported by JST CREST,
M.M.\ by JST CREST and JSPS/MEXT Grant-in-Aid No.\ 26310211, 15K13460,
and
K.S.\ is supported by JST CREST and JSPS Grant-in-Aid for JSPS Fellows (No.\ 17J00466).}
}
\date{\today}
\author{
Hiroki Kajiura\thanks{Graduate School of Science, Hiroshima University.
1-3-1 Kagamiyama, Higashi-Hiroshima, 739-8526, Japan.
e-mail: hikajiura@hiroshima-u.ac.jp},
Makoto Matsumoto\thanks{Graduate School of Science, Hiroshima University.
1-3-1 Kagamiyama, Higashi-Hiroshima, 739-8526, Japan.
e-mail: m-mat@math.sci.hiroshima-u.ac.jp},
Kosuke Suzuki
\thanks{Graduate School of Science, Hiroshima University. 1-3-1 Kagamiyama, Higashi-Hiroshima, 739-8526, Japan. JSPS Research Fellow. e-mail: kosuke-suzuki@hiroshima-u.ac.jp}
}
\maketitle

\begin{abstract}
We study $3$-dimensional digital nets over $\Ftwo$ generated by matrices $(I,B,B^2)$
where $I$ is the identity matrix and $B$ is a square matrix.
We give a characterization of $B$ for which 
the $t$-value of the digital net is $0$.
As a corollary, we prove that such $B$ satisfies $B^3=I$.
\end{abstract}

\section{Introduction and main result}
Let $\Ftwo = \{0,1\}$ be the field of two elements, 
$m \geq 1$ be a positive integer,
and $\FtwoMat$ be the set of $m \times m$ matrices over $\Ftwo$.
For $C_1, \dots, C_s \in \FtwoMat$,
the digital net generated by $(C_1, \dots, C_s)$ is a point set in $[0,1)^s$
defined as follows.
For $0 \leq l <2^m$, we denote the $2$-adic expansion of $l$
by $l=\iota_0 + \iota_12 + \dots + \iota_{m-1}2^{m-1}$
with $\iota_0, \dots, \iota_{m-1}\in \Ftwo$.
We define $\bsy_{l,j} \in \Ftwo^m$ for $1 \leq j \leq s$ as
\[
\bsy_{l,j} := C_j (\iota_0, \dots, \iota_{m-1})^\top \in \Ftwo^m.
\]
Then we obtain the $l$-th point
\begin{equation}\label{eq:pt-generation}
\bsx_l := (\phi(\bsy_{l,1}), \dots, \phi(\bsy_{l,s}))
\end{equation}
where $\phi \colon \Ftwo^m \to [0,1)$ is defined as
\[
\phi((y_1, \dots, y_{m})^\top) := \frac{y_{1}}{2}+\frac{y_{2}}{2^2}+ \dots + \frac{y_{m}}{2^m}.
\]
The digital net generated by $(C_1, \dots, C_s)$
is the point set $\{\bsx_0, \dots, \bsx_{2^m-1}\} \subset [0,1)^s$.
Digital nets are introduced by Niederreiter
and have been widely used to generate point sets in Quasi-Monte Carlo
(QMC) theory,
see \cite{Niederreiter1992rng} for details.

A popular criterion of the uniformity of digital nets is the $t$-value.
Let $m \geq 1$, $0 \leq t \leq m$, and $s \geq 1$ be integers.
A point set $P = \{\bsx_0, \dots, \bsx_{2^m - 1}\} \subset [0,1)^s$
is called a $(t,m,s)$-net over $\Ftwo$
if, for all nonnegative integers $d_1, \dots , d_s$ with $d_1 + \dots + d_s = m - t$,
the elementary intervals
$\prod_{i=1}^{s}{\left[a_i/2^{d_i}, (a_i +1)/2^{d_i} \right)}$
contain exactly $2^t$ points for all choices of
$0 \leq a_i < 2^{d_i}$ with $a_i \in \bZ$ for $1 \leq i \leq s$.
In this paper we study $t$-values of specific $3$-dimensional digital nets over $\Ftwo$.
Small value of $t$ is preferable for QMC integration
\cite{Niederreiter1992rng}.
 
To state our main result, we introduce our notation.
Let $I_m$ be the $m \times m$ identity matrix.
Let $J_m$ be the $m \times m$ anti-diagonal matrix whose anti-diagonal entries are all 1,
and $P_m$ be the $m \times m$ upper-triangular Pascal matrix, i.e.,
\[
J_m =
\begin{pmatrix}
0 &  & 1\\
& \iddots \\
1 & &  0
\end{pmatrix},
\qquad
P_m = \left(\binom{j-1}{i-1}\right)_{i,j=1}^m =
\begin{pmatrix}
\binom{0}{0} & \binom{1}{0} & \dots & \binom{m-1}{0}\\
 & \binom{1}{1} & & \vdots   \\
 & & \ddots & \vdots  \\
 & &  & \binom{m-1}{m-1}
\end{pmatrix},
\]
which are considered in modulo $2$.
If there is no confusion, we omit the subscripts
and simply write as $I$, $J$, and $P$.
Let $\Lset$ (resp.\ $\Uset$) be the set of $m \times m$ lower- (resp.\ upper-) triangular matrices
over $\Ftwo$.
Note that $\Lset \cap \Uset = \{I\}$ holds.
For matrices $C_1, \dots, C_s \in \FtwoMat$,
$t(C_1, \dots, C_s)$ denotes the $t$-value of the digital net generated by $(C_1, \dots, C_s)$. 

Now we are ready to state our main result.
\begin{theorem} \label{thm:IBB2-structure}
Let $m \geq 1$ be an integer and $B \in \FtwoMat$.
Then the following are equivalent.
\begin{enuroman}
\item \label{eq:IBB2-structure-equiv1}
$t(I,B, B^2)=0$.
\item \label{eq:IBB2-structure-equiv2}
There exists $L \in \Lset$
such that $B=LPJL^{-1}$.
\end{enuroman}
Moreover, if one of the above holds, then we have $B^3 = I$.
\end{theorem}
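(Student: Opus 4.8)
The plan rests on the standard linear-algebra reformulation of the $t$-value: for $C_1,\dots,C_s\in\FtwoMat$ one has $t(C_1,\dots,C_s)=0$ if and only if, for every choice of nonnegative integers $d_1,\dots,d_s$ with $d_1+\dots+d_s=m$, the $m\times m$ matrix obtained by stacking the top $d_j$ rows of $C_j$ ($j=1,\dots,s$) is nonsingular over $\Ftwo$ (see \cite{Niederreiter1992rng}). Two elementary invariances will be used constantly: $t(C_1,\dots,C_s)$ is unchanged by a common right multiplication $C_j\mapsto C_jW$ with $W$ invertible (it only relabels the points), and by $C_j\mapsto\Lambda_jC_j$ with each $\Lambda_j$ an invertible lower-triangular matrix (the top $d$ rows of $\Lambda_jC_j$ and of $C_j$ span the same space). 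Together these give $t(I,B,B^2)=t(I,L^{-1}BL,L^{-1}B^2L)$ for every invertible $L\in\Lset$, so the property in \ref{eq:IBB2-structure-equiv1} depends only on the $\Lset$-conjugacy class of $B$; since \ref{eq:IBB2-structure-equiv2} obviously does as well, it suffices to understand one representative of each relevant class.

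For the implication \ref{eq:IBB2-structure-equiv2}$\Rightarrow$\ref{eq:IBB2-structure-equiv1} (and the claim $B^{3}=I$): if $B=LPJL^{-1}$ then $t(I,B,B^2)=t(I,PJ,(PJ)^2)$, so I only need $(PJ)^3=I$ and $t(I,PJ,(PJ)^2)=0$. For the first, identify $\Ftwo^m$ with the space of polynomials of degree $<m$ over $\Ftwo$; under this identification $P$ is the substitution $p(x)\mapsto p(x+1)$ and $J$ is $p(x)\mapsto x^{m-1}p(1/x)$. Both are involutions, and a short computation with the induced fractional-linear maps of $\mathbb{P}^1$ (namely $x\mapsto x+1$ and $x\mapsto 1/x$, which generate $\mathrm{PGL}_2(\Ftwo)\cong S_3$) telescopes to show $(PJ)^3 p=p$, i.e.\ $(PJ)^3=I$; hence $B^3=L(PJ)^3L^{-1}=I$ once the equivalence is established. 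For $t(I,PJ,(PJ)^2)=0$, using $(PJ)^2=(PJ)^{-1}=JP$ and row-reducing within each block, the matrices whose nonsingularity is demanded reduce to arrays of binomial coefficients modulo $2$, and nonsingularity follows from Lucas' theorem together with the fact that $P$ is unipotent upper-triangular (so the first $k$ columns of $P$ span $\langle e_1,\dots,e_k\rangle$). I expect this to be a finite but somewhat fiddly case analysis over the compositions $d_1+d_2+d_3=m$.

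For the implication \ref{eq:IBB2-structure-equiv1}$\Rightarrow$\ref{eq:IBB2-structure-equiv2}: assume $t(I,B,B^2)=0$. Specializing to $d_3=0$ shows that $(I,B)$ generates a $(0,m,2)$-net, which unwinds exactly to the statement that all leading principal minors of $BJ$ are nonzero; hence $BJ$ has an $LU$-factorization $BJ=L_1U_1$ with $L_1$ unit lower- and $U_1$ unit upper-triangular (over $\Ftwo$, ``invertible upper-triangular'' forces unit diagonal). Conjugating $B$ by $L_1$ replaces it by $L_1^{-1}BL_1=U_1JL_1=\big(U_1(JL_1J)\big)J=:VJ$ with $V:=U_1(JL_1J)$ unit upper-triangular. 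Thus, after this normalization, the whole theorem reduces to the rigidity statement: \emph{if $V$ is unit upper-triangular and $t(I,VJ,(VJ)^2)=0$, then $V=P$}; equivalently, writing $B=VJ$ (which is anti-upper-triangular with $1$'s on the anti-diagonal), $B_{ij}\equiv\binom{m-j}{i-1}\pmod 2$ for all $i,j$. I would prove this by induction on $m$: deleting the first row and the last column of $VJ$ produces a smaller anti-upper-triangular matrix with unit anti-diagonal to which the inductive hypothesis should apply once the corresponding $t=0$ conditions are shown to descend, and the remaining compositions with $d_3\ge1$ should pin down the newly exposed anti-diagonal of $V$ and enforce Pascal's recursion $V_{ij}=V_{i-1,j-1}+V_{i,j-1}$. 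Given $V=P$ we obtain $B=L_1(PJ)L_1^{-1}$, which is of the required form with $L=L_1\in\Lset$.

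The crux is this rigidity step. The difficulty is that the block of conditions coming from $B^2=V(JVJ)$ couples the unknown $V$ with its $J$-conjugate $JVJ$, so neither a clean factorization nor a naive passage to a submatrix immediately yields a genuinely smaller instance; reconciling this with the inductive bookkeeping — in particular, verifying that enough of the three-dimensional balance conditions survive the reduction to force every entry of $V$ — is where the real work lies.
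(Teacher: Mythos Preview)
Your outline for \ref{eq:IBB2-structure-equiv2}$\Rightarrow$\ref{eq:IBB2-structure-equiv1} and for $B^3=I$ is essentially the paper's: the same $\Lset$-conjugacy reduction to $B=PJ$, and the same polynomial interpretation of $P$ and $J$ to obtain $(PJ)^3=I$. For the verification of $t(I,PJ,(PJ)^2)=0$ the paper avoids any Lucas-type case analysis: using $(PJ)^3=I$ (hence $JPJ\in\Lset$ and $(JPJ)^{-1}=PJP$) together with the invariances you state, one transforms $(I,PJ,(PJ)^2)$ into $(J,P,I)$, and $t(I,P,J)=0$ is exactly the fact that appending the reflected index coordinate to the first $2^m$ points of the two-dimensional Faure sequence yields a $(0,m,3)$-net. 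This is shorter and more conceptual than the direct binomial-determinant route you propose, though yours would also go through.

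The real gap is in \ref{eq:IBB2-structure-equiv1}$\Rightarrow$\ref{eq:IBB2-structure-equiv2}. Your normalization to $B=VJ$ with $V$ unit upper-triangular is correct and matches the paper's, but the proposed induction on $m$ is not a viable plan as stated: deleting the first row and last column of $VJ$ does \emph{not} make $(VJ)^2$ restrict to $(V'J')^2$ for the truncated $V'$, because squaring does not commute with passing to a principal submatrix. So the three-block $t=0$ conditions simply do not descend to a smaller instance of the same problem, and there is no evident way to ``force Pascal's recursion'' row by row from what survives. You correctly identify this coupling of $V$ with $JVJ$ as the crux, but you have not supplied an argument that overcomes it.

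The paper's idea here is different and worth knowing. Rather than inducting on $m$, it proves a rigidity lemma at fixed $m$: if $t(A,B,C)=t(A,B,C')=0$, then there exists $L\in\Lset$ with $C'=LC$. The proof is a counting argument. For each $j$ one sets
\[
V_{i,j}=\langle \bsa_1,\dots,\bsa_i,\ \bsb_1,\dots,\bsb_{m-i-j-1},\ \bsc_1,\dots,\bsc_j\rangle,
\]
shows by induction on $k$ that any $k$ of the hyperplanes $V_{i,j}$ (with $j$ fixed) intersect in a subspace of dimension exactly $m-k$, and then inclusion--exclusion gives $\bigl|\bigcap_i V_{i,j}^c\bigr|=2^j$. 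Since $\bsc_{j+1}+\langle\bsc_1,\dots,\bsc_j\rangle$ is contained in that intersection and already has $2^j$ elements, it \emph{equals} it; the same holds for $C'$, whence $\bsc'_{j+1}\in\bsc_{j+1}+\langle\bsc_1,\dots,\bsc_j\rangle$ for every $j$, i.e.\ $C'=LC$ with $L\in\Lset$. Applying this with $(A,B)=(I,J)$, $C=JL_2L_1J$ and $C'=P$ (both upper-triangular) forces $L=I$, hence $V=P$ and $B=L_1PJL_1^{-1}$. This bypasses entirely the obstruction you ran into.
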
 
Note that for digital nets over $\Ftwo$, $t(C_1, \dots, C_s)=0$ is achievable
if and only if $s\leq 3$ 
(see \cite[Corollary~4.21]{Niederreiter1992rng} or \cite{Faure1982dds}).
Thus, the above theorem shows that this extreme $s=3$
can be realized in the special form $t(I,B,B^2)$.

\paragraph{Background.}
Our original motivation is 
to find a periodic sequence for Markov Chain Quasi-Monte Carlo (MCQMC) method.
Let us recall the rough idea.
Let $x_1, x_2, \dots$ be a sequence of points in $[0,1)$. For an integer $s \geq 1$,
we define 
\begin{equation}
\bar\bsx^{(s)}_i = (x_{i}, x_{i+1}, \dots x_{i+s-1}) \in [0,1)^s, \label{eq:overlapped sequence}
\end{equation}
where they are made up of overlapping consecutive $s$-tuples from the sequence.
The sequence is said to be completely uniformly distributed (CUD)
if $\bar\bsx^{(s)}_1, \bar\bsx^{(s)}_2, \dots$ is uniformly distributed in $[0,1)^s$ for all $s \geq 1$.

We do not explain on MCQMC method,
but it is shown that CUD sequence can be used instead of uniformly
i.i.d.\ uniform random numbers in $[0,1)$. 
Markov Chain Monte Carlo (MCMC) with the driving sequence being CUD is consistent to the original MCMC,
see \cite{Chen2011cmc}. 

Constructions for CUD points given in \cite{Levin1999dec} are not convenient to implement.
Instead, it was suggested by Tribble \cite{Tribble2007mcm} to use multiple congruential
generators and linear feedback shift registers.
Chen et.\ al.\ \cite{Chen2012nim} considered a periodic sequence 
$x_1, x_2, \dots$ with period $p$, the $s$-dimensional point set
\begin{equation}
S_s:=\{\bar\bsx^{(s)}_i = (x_{i}, x_{i+1}, \dots x_{i+s-1}) \in [0,1)^s
\mid i=1,\ldots, p\}
\end{equation}
whose cardinality is $p$ as a multi set. It is expected to work well for MCQMC
if $S_s$ is hyperuniform for every $s$.
Assume that $S_s\cup \{0\}$ is a $\Ftwo$-sub vector space (this
condition is necessary to compute $t$-value in a practical time) of,
say, dimension $m$. Here, each $x_i$ is assumed to be identified with 
an element in $\Ftwo^m$ through $\phi$. Let $V$ be this vector space
$S_s\cup \{0\}$. 
We further require that $S_1$ is a $(0,m,1)$-net. Then, the projection to the 
first component $\pr_1\colon V \to \Ftwo^m$ is linearly isomorphic.
This implies that the second projection $\pr_2\colon V \to \Ftwo^m$ is also isomorphic
since the images of them are the same.
Thus $\pr_2 \circ \pr_1^{-1}$ is also isomorphic.
This means that there is a fixed $B\in \FtwoMat$ such that
\[
x_{i+1} =Bx_i
\]
holds for $i=1,2,\ldots$. Moreover, since we have assumed that $S_s\cup\{0\}$
is a $m$-dimensional vector space, $x_i$ must take all non zero values
once for $1\leq i \leq p-1$. This is equivalent that $B$ is primitive
(i.e., the multiplicative order of $B$ is $2^{m}-1$ and $p=2^m-1$).
This type of pseudorandom number generator is well studied, such as
combined Tausworthe generators, see L'Ecuyer et.\ al.\ \cite{LEcuyer1999qmc}.
Under our assumptions, we observe that 
the set
\[
\{\bar\bsx^{(s)}_i \mid 0 \leq i < 2^m-1\} \cup \{\bszero\}
\]
is the digital net generated by $(I,B,B^2, \dots,B^{s-1})$, as a set.

Our original interest is to obtain such a maximal periodic $B$ 
with small $t$-value for wide $s$, to generate a pseudo-CUD sequence.
For example, $t=0$ might be possible for $s=2$, which is the 
theoretical bound stated above (below Theorem~\ref{thm:IBB2-structure}).
However, an exhaustive search for matrices $B$ with $t(I,B,B^2)=0$ for
$m\leq 5$ resulted non-primitive $B$. Actually, we obtained a negative result
Theorem~\ref{thm:IBB2-structure}:
For $s=3$ and $m \geq 3$, the digital net generated by $(I,B,B^2)$ is a $(0,m,3)$-net
only if $B^3=I$.
Thus there is no $B \in \FtwoMat$ satisfying our assumptions for $m\geq 3$.
Hence we conclude that our construction of $\Ftwo$-linear generator with maximal period
is not optimal with respect to the $t$-value for $s=3$. We need to
consider some looser condition, such as considered in \cite{Chen2012nim}.

\section{Preliminaries}
We first recall results for $t$-value of digital nets.
It is known that $t$-value of digital nets is related to the linear independence of column vectors of generating matrices.
\begin{lemma}[{\cite[Theorem~4.52]{Dick2010dna}}]\label{lem:tval-linear-indpendence}
Let $C_1, \dots, C_s \in \FtwoMat$
and denote by $\bsc_i^j$ the $j$-th row of $C_i$.
Assume that,
for all choices of nonnegative integers $d_1, \dots , d_s$ with $d_1 + \dots + d_s = m - t$,
$m-t$ vectors $\{\bsc_i^j \mid 1 \leq j \leq d_i \}$ are linear independent.
Then the digital net generated by $(C_1, \dots, C_s)$ is a $(t,m,s)$-net over $\Ftwo$.
\end{lemma}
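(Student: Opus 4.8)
The plan is to reduce the geometric net condition to a rank statement for linear maps over $\Ftwo$ and then count solutions. Fix a composition $d_1,\dots,d_s\ge 0$ with $d_1+\dots+d_s=m-t$, and fix an elementary interval, i.e.\ integers $0\le a_i<2^{d_i}$; write $a_i=\sum_{j=1}^{d_i}\alpha_{i,j}2^{d_i-j}$ with $\alpha_{i,j}\in\Ftwo$, so that $a_i/2^{d_i}=\sum_{j=1}^{d_i}\alpha_{i,j}2^{-j}$. The goal is to show that exactly $2^t$ of the points $\bsx_l$ lie in $\prod_{i=1}^s[a_i/2^{d_i},(a_i+1)/2^{d_i})$.

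First I would record the digit-extraction fact. Since $\phi$ sends $\bsy=(y_1,\dots,y_m)^\top$ to the dyadic number $\sum_{k=1}^m y_k 2^{-k}\in[0,1)$, the value $\phi(\bsy)$ lies in $[a_i/2^{d_i},(a_i+1)/2^{d_i})$ precisely when its leading $d_i$ binary digits are prescribed, namely $(y_1,\dots,y_{d_i})=(\alpha_{i,1},\dots,\alpha_{i,d_i})$. Applying this to $\bsy_{l,i}=C_i\bsl$, where $\bsl=(\iota_0,\dots,\iota_{m-1})^\top$ and the $j$-th coordinate of $C_i\bsl$ is $\bsc_i^j\bsl$, membership of the point $\bsx_l$ in the chosen elementary interval becomes the linear system
\[
\bsc_i^j\bsl=\alpha_{i,j}\qquad(1\le j\le d_i,\ 1\le i\le s).
\]

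Second, I would assemble these equations. Stacking the $m-t$ rows $\{\bsc_i^j\mid 1\le j\le d_i\}$ over all $i$ gives a matrix $M\in\Ftwo^{(m-t)\times m}$, and the system reads $M\bsl=\bsalpha$ with $\bsalpha\in\Ftwo^{m-t}$ collecting the prescribed digits $\alpha_{i,j}$. The hypothesis says exactly that these $m-t$ rows are linearly independent, i.e.\ $M$ has full row rank $m-t$; hence $\bsl\mapsto M\bsl$ is a surjective $\Ftwo$-linear map $\Ftwo^m\to\Ftwo^{m-t}$ with kernel of dimension $t$. Consequently, for every right-hand side $\bsalpha$ the solution set is a coset of the kernel and has exactly $2^t$ elements.

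Finally I would translate back to points. As $l$ runs over $0\le l<2^m$, the correspondence $l\mapsto\bsl$ is a bijection onto $\Ftwo^m$, and the data $\bsalpha$ is precisely the choice of $(a_1,\dots,a_s)$; thus each elementary interval of shape $(d_1,\dots,d_s)$ contains exactly $2^t$ of the $\bsx_l$. Since the composition $(d_1,\dots,d_s)$ of $m-t$ was arbitrary, this is exactly the defining property of a $(t,m,s)$-net. The only point needing care is the digit-extraction step: one must confirm that no boundary ambiguity intrudes. This is automatic here because $\phi$ outputs $m$-digit dyadic fractions in $[0,1)$, so the leading $d_i\le m$ digits unambiguously pin down the half-open dyadic subinterval of length $2^{-d_i}$, and the familiar ambiguity of infinite binary expansions never arises. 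Everything else is elementary linear algebra over $\Ftwo$.
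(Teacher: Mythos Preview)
Your argument is correct and is the standard one: translate membership in an elementary interval into a prescription of leading dyadic digits, rewrite that as an affine linear system $M\bsl=\bsalpha$ over $\Ftwo$ with $M$ built from the relevant top rows of the $C_i$, and use the assumed linear independence to conclude that $M$ has full row rank, so every fibre has size $2^t$. The remark about the absence of boundary ambiguity is apt, since $\phi$ outputs finite $m$-digit expansions.

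As for comparison with the paper: there is nothing to compare. The paper does not prove this lemma at all; it is quoted from \cite[Theorem~4.52]{Dick2010dna} and used as a black box. Your write-up is essentially the proof one finds in that reference (and in Niederreiter's original treatment), so it is entirely appropriate as a self-contained justification.
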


\begin{lemma}\label{lem:tval-invariant}
Let $C_1, \dots, C_s \in \FtwoMat$ and $L_1, \dots, L_s \in \Lset$.
Let $G \in \FtwoMat$ be non-singular.
Then we have $t(C_1, \cdots, C_s)=t(L_1C_1G, \cdots, L_sC_sG)$.
\end{lemma}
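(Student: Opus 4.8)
}
The plan is to peel off the two kinds of factors one at a time. Since $(L_1C_1G,\dots,L_sC_sG)$ is obtained from $(C_1,\dots,C_s)$ by first replacing each $C_i$ with $L_iC_i$ and then right-multiplying all of the resulting matrices by the single $G$, it suffices to establish the two special cases (a) $t(C_1G,\dots,C_sG)=t(C_1,\dots,C_s)$ for any non-singular $G$, and (b) $t(L_1C_1,\dots,L_sC_s)=t(C_1,\dots,C_s)$ for any $L_1,\dots,L_s\in\Lset$; the lemma is then just the composition of these two steps.

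For (a) I would observe that right-multiplication by a common invertible matrix is merely a reindexing of the generated points. By \eqref{eq:pt-generation}, the digital net generated by matrices $(D_1,\dots,D_s)$ is the multiset $\{(\phi(D_1\bsv),\dots,\phi(D_s\bsv)):\bsv\in\Ftwo^m\}$, where $\bsv=(\iota_0,\dots,\iota_{m-1})^\top$ runs over all of $\Ftwo^m$ as $l$ runs over $0,\dots,2^m-1$. Taking $D_i=C_iG$ and substituting $\bsv\mapsto G^{-1}\bsv$, which is a bijection of $\Ftwo^m$, identifies this multiset with the one generated by $(C_1,\dots,C_s)$; as the $t$-value is a function of the generated multiset of points alone, (a) follows.

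For (b) the key point is that left-multiplication by a matrix in $\Lset$ does not change the row spaces of the prefixes of the generating matrices. Precisely: for $L\in\Lset$ and $0\le d\le m$, writing $L^{(d)}$ for the top-left $d\times d$ block of $L$, the first $d$ rows of $LC$ equal $L^{(d)}$ times the first $d$ rows of $C$ — because unit-lower-triangularity forces the $k$-th row of $LC$ (for $k\le d$) to be a linear combination of rows $1,\dots,k$ of $C$ with coefficients from $L^{(d)}$ — and $L^{(d)}$ is itself unit lower triangular, hence invertible over $\Ftwo$. Now fix a shape $(d_1,\dots,d_s)$. A generated point $\bsx_l$ lies in the elementary box $\prod_{i=1}^s[a_i/2^{d_i},(a_i+1)/2^{d_i})$ exactly when $(\text{first }d_i\text{ rows of }C_i)\,\bsv$ equals the digit vector of $a_i$ for each $i$ (with $\bsv$ the digit vector of $l$); replacing $C_i$ by $L_iC_i$ replaces this affine condition by the same one with the prescribed digit vector multiplied by the invertible $L_i^{(d_i)}$. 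Thus the substitution $C_i\mapsto L_iC_i$ merely permutes the elementary boxes of shape $(d_1,\dots,d_s)$ among themselves and leaves every point count unchanged; consequently $(C_1,\dots,C_s)$ generates a $(t,m,s)$-net if and only if $(L_1C_1,\dots,L_sC_s)$ does, for every $t$, which is (b). (Alternatively, granting the converse of Lemma~\ref{lem:tval-linear-indpendence} also contained in \cite[Theorem~4.52]{Dick2010dna}, one can say that the $t$-value depends only on which families $\{\text{first }d_i\text{ rows of }C_i\}$ are linearly independent, hence only on the row spans just shown to be invariant.)

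Nothing above is delicate; the one place to be careful is the claim about prefixes of rows — that the block $L^{(d)}$ carrying the first $d$ rows of $C$ to those of $LC$ is independent of $C$ and invertible — after which (a) and (b) chain together to the statement.
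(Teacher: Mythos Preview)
Your proof is correct and follows essentially the same two-step decomposition as the paper: first argue that right-multiplication by a common invertible $G$ leaves the generated point set unchanged, then that left-multiplication by $L_i\in\Lset$ preserves the linear-independence conditions of Lemma~\ref{lem:tval-linear-indpendence} (equivalently, the row spans of the top $d_i$ rows). Your write-up is more explicit than the paper's---in particular, your direct box-counting argument for step (b) avoids appealing to the converse of Lemma~\ref{lem:tval-linear-indpendence}, which the paper uses implicitly---but the underlying idea is the same.
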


\begin{proof}
Since $G$ is non-singular, $(L_1C_1, \cdots, L_sC_s)$ and $(L_1C_1G, \cdots, L_sC_sG)$
generate the same digital net (as set) and hence we have
$t(L_1C_1, \cdots, L_sC_s) = t(L_1C_1G, \cdots, L_sC_sG)$.
Further, since $L_1, \dots, L_s \in \Lset$,
multiplying them from left does not change the linear independence
appearing in Lemma~\ref{lem:tval-linear-indpendence}. Thus it does not change the $t$-value,
i.e., $t(C_1, \cdots, C_s) = t(L_1C_1, \cdots, L_sC_s)$.
\end{proof}

In the rest of this section,
we give explicit $B$ where 
the digital net generated by $(I,B,B^2)$ is a $(0,m,3)$-net over $\Ftwo$. 
To this end, we introduce the notion of $(t,s)$-sequence.
\begin{definition}
Let $t \geq 0$ and $s \geq 1$ be integers.
A sequence $\bsx_0, \bsx_1, \dots$ of points in $[0,1)^s$
is said to be a $(t,s)$-sequence over $\Ftwo$
if, for all integers $k \geq 0$ and $m>t$, the point set
$\{\bsx_n \mid k2^m \leq n < (k+1)2^m\}$ forms a $(t,m,s)$-net over $\Ftwo$.
\end{definition}
There are many known explicit constructions of digital nets with low $t$-value.
Among them we introduce the Faure sequence \cite{Faure1982dds}.
The Faure sequence over $\Ftwo$ is a $(0,2)$-sequence
where the $l$-th point $\bsx_l \in [0,1)^2$ is generated 
as in \eqref{eq:pt-generation} by matrices $(I_m, P_m)$
(note that it gives the same $\bsx_l$ even if $m$ is different),
see, for example, \cite[Section~8.1]{Dick2010dna}.

From $(t,s)$-sequence, we can generate $(t,m,s+1)$-net \cite[Lemma~4.22]{Niederreiter1992rng}.
\begin{lemma}\label{lem:seq-to-net}
Let $\{\bsx_i\}_{i \geq 0}$ be $(t,s)$-sequence over $\Ftwo$.
Then $\{(\bsx_i, i 2^{-m})\}_{i=0}^{2^m -1}$ is a $(t,m,s+1)$-net over $\Ftwo$.
\end{lemma}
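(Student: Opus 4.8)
The plan is to verify directly the defining condition of a $(t,m,s+1)$-net for the point set $\{(\bsx_i, i2^{-m})\}_{i=0}^{2^m-1}$, reducing each $(s+1)$-dimensional elementary interval to an $s$-dimensional one that the sequence hypothesis already controls. First I would fix nonnegative integers $d_1, \dots, d_{s+1}$ with $d_1 + \dots + d_{s+1} = m-t$ together with an elementary interval $E = \prod_{i=1}^{s+1}[a_i/2^{d_i}, (a_i+1)/2^{d_i})$, and isolate the last coordinate by writing $d := d_{s+1}$ and $a := a_{s+1}$. The key observation is that the last coordinate of the point indexed by $n$ is exactly $n2^{-m}$, and these values run bijectively over the dyadic grid $\{0, 2^{-m}, \dots, (2^m-1)2^{-m}\}$ as $n$ ranges over $0 \le n < 2^m$. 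Hence the constraint $n2^{-m} \in [a/2^d, (a+1)/2^d)$ is equivalent to $a2^{m-d} \le n < (a+1)2^{m-d}$; that is, the admissible indices form exactly one dyadic block $\{n : k2^{m'} \le n < (k+1)2^{m'}\}$ with $m' := m-d$ and $k := a$, which is precisely the kind of block occurring in the definition of a $(t,s)$-sequence.

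Next I would restrict attention to that block. The remaining requirement is that the first $s$ coordinates lie in the $s$-dimensional elementary interval $E' := \prod_{i=1}^s [a_i/2^{d_i}, (a_i+1)/2^{d_i})$, whose exponents satisfy $\sum_{i=1}^s d_i = (m-t) - d = m' - t$. In the generic case $m' > t$, the $(t,s)$-sequence property asserts that $\{\bsx_n : a2^{m'} \le n < (a+1)2^{m'}\}$ is a $(t,m',s)$-net over $\Ftwo$. Since $\sum_{i=1}^s d_i = m'-t$ is exactly the total exponent required by the definition of such a net, that definition yields that $E'$ contains exactly $2^t$ of these points $\bsx_n$. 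Consequently $E$ contains exactly $2^t$ points of the $(s+1)$-dimensional set, as needed.

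Finally I would treat the boundary case $m' = t$, i.e.\ $d = m-t$ and $d_1 = \dots = d_s = 0$, for which the sequence definition (requiring $m' > t$) does not directly apply. Here $E' = [0,1)^s$ is the entire cube, so every index in the block contributes, and the block has cardinality $2^{m'} = 2^t$; thus $E$ again contains exactly $2^t$ points. The only genuinely delicate point in the argument is this alignment step: checking that the last-coordinate interval carves out exactly a dyadic block of the index range that matches the blocks in the $(t,s)$-sequence definition, and handling $m' = t$ separately so that the strict inequality $m' > t$ demanded by that definition is never violated. Everything else is bookkeeping, and assembling these observations over all choices of $(d_1,\dots,d_{s+1})$ and $(a_1,\dots,a_{s+1})$ completes the verification.
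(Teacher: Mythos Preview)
Your argument is correct. Note, however, that the paper does not actually give its own proof of this lemma: it is stated with a citation to \cite[Lemma~4.22]{Niederreiter1992rng} and used as a black box. Your direct verification---reducing the last-coordinate constraint to a dyadic block of indices, invoking the $(t,s)$-sequence property on that block for $m'>t$, and handling the degenerate case $m'=t$ separately---is precisely the standard proof of this classical fact (and is what one finds in Niederreiter's book), so there is nothing to compare beyond noting that you have supplied a proof where the paper merely cites one.
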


When $\{\bsx_0, \dots, \bsx_{2^{m}-1}\}$ is the first $2^m$ points of the Faure sequence over $\Ftwo$,
which is the digital net generated by $(I_m, P_m)$,
the $3$-dimensional point set $\{(\bsx_i, i 2^{-m})\}_{i=0}^{2^m -1}$
is found to be a digital net generated by $(I_m, P_m, J_m)$.
Thus it follows from Lemma~\ref{lem:seq-to-net} that
\begin{equation}\label{eq:0m3net-example}
t(I_m, P_m, J_m) = 0.
\end{equation}

We move on to the property of the matrix $PJ$.
\begin{lemma}\label{lem:PJPJPJ}
For any positive integer $m$,
we have
\[
P^2 = J^2 = (PJ)^3 = I \quad \text{in $\FtwoMat$}.
\]
\end{lemma}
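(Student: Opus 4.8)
\emph{Plan.} The cleanest route is to realize $P$ and $J$ as operators on a common space. Let $V$ be the $\Ftwo$-vector space of polynomials in $\Ftwo[x]$ of degree $\le m-1$, with ordered basis $1, x, \dots, x^{m-1}$. I claim that, with respect to this basis, $P$ is the matrix of the shift operator $T\colon f(x)\mapsto f(x+1)$ and $J$ is the matrix of the reversal operator $R\colon f(x)\mapsto x^{m-1}f(1/x)$. Indeed $T(x^{j-1})=(x+1)^{j-1}=\sum_i \binom{j-1}{i-1}x^{i-1}$, whose coordinate vector is the $j$-th column of $P$, and $R(x^{j-1})=x^{m-j}$, whose coordinate vector is the $j$-th column of $J$; both operators clearly map $V$ into $V$. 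Under this dictionary, matrix products correspond to composition of operators, so the three claimed identities become $T^2=\id_V$, $R^2=\id_V$, and $(T\circ R)^3=\id_V$.

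The first two are immediate: $T^2(f)(x)=f(x+2)=f(x)$ over $\Ftwo$, and $R^2(f)(x)=x^{m-1}\bigl((1/x)^{m-1}f(x)\bigr)=f(x)$. For the third, write $U=T\circ R$, so that $U(f)(x)=(x+1)^{m-1}f\!\left(\tfrac{1}{x+1}\right)$. Iterating and repeatedly using the characteristic-$2$ simplification $1+(x+1)=x$, one finds $U^2(f)(x)=x^{m-1}f\!\left(\tfrac{x+1}{x}\right)$ and then $U^3(f)(x)=f(x)$. Conceptually, $U$ is just the order-$3$ Möbius substitution $z\mapsto 1/(z+1)$ of $\bP^1(\Ftwo)$ acting on $V$, so its cube is the identity; the only point that requires care — and the main obstacle in this approach — is checking that each of the three nested substitutions collapses correctly modulo $2$, which is the one place where the characteristic is genuinely used.

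\emph{Alternative.} If one prefers a purely computational argument, $J^2=I$ is clear (it is the reversal permutation), and $P^2=I$ follows from the ``subset-of-a-subset'' identity $\sum_j\binom{j-1}{i-1}\binom{k-1}{j-1}=\binom{k-1}{i-1}2^{k-i}$, which is even unless $i=k$. Having established $P^2=J^2=I$, one has $(PJ)^{-1}=JP$, so $(PJ)^3=I$ is equivalent to $PJPJ=JP$, and hence (right-multiplying by $J$) to $PJP=JPJ$. Now $(JPJ)_{ik}=\binom{m-k}{m-i}$ while $(PJP)_{ik}=\sum_a\binom{a-1}{i-1}\binom{k-1}{m-a}$, so the remaining task — the main obstacle here — is the congruence $\sum_a\binom{a-1}{i-1}\binom{k-1}{m-a}\equiv\binom{m-k}{m-i}\pmod 2$. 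This follows by extracting the coefficient of $x^{i-1}$ from $(1+x)^{m-k}(2+x)^{k-1}$, which reduces modulo $2$ to $(1+x)^{m-k}x^{k-1}$.
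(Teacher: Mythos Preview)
Your first approach is correct and is essentially the paper's proof: the paper too realizes $P$ and $J$ as the operators $f(x)\mapsto f(1-x)$ and $f(x)\mapsto x^{m-1}f(1/x)$ on the span of $1,x,\dots,x^{m-1}$, and deduces $(PJ)^3=I$ from the fact that the underlying M\"obius substitution has order $3$. The only cosmetic difference is that the paper works over an arbitrary field $k$, separates the reversal as $\mathcal{J}=(x^{m-1}\times)\circ\mathcal{K}$ with $\mathcal{K}\colon x\mapsto x^{-1}$ a ring endomorphism, first checks $(\mathcal{P}\mathcal{K})^3=\id$ by evaluating at $x$, and then propagates the scalar factors to get $(\mathcal{P}\mathcal{J})^3=(-1)^{m-1}\id$ before specializing to characteristic $2$; you work directly over $\Ftwo$ and iterate $U=T\circ R$ by hand. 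Both routes amount to the same $S_3$-action of $\langle x\mapsto 1-x,\ x\mapsto 1/x\rangle$ on $\bP^1$.

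Your alternative is also correct and is a genuinely different argument. The paper never computes entries of $PJP$ or invokes binomial identities; it stays entirely on the operator side. Your reduction of $(PJ)^3=I$ to $PJP=JPJ$ and then to the congruence $\sum_a\binom{a-1}{i-1}\binom{k-1}{m-a}\equiv\binom{m-k}{m-i}\pmod 2$, proved via the generating function $(1+x)^{m-k}(2+x)^{k-1}\equiv (1+x)^{m-k}x^{k-1}$, is a self-contained combinatorial proof that avoids the polynomial-operator viewpoint altogether. It is more elementary but less conceptual: one loses the explanation of \emph{why} the order is $3$ (the $S_3$ picture), trading it for a short Lucas/Kummer-style parity computation.
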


\begin{proof}
It is clear to check $J^2=I$.
We now prove $P^2=I$ in $\FtwoMat$. 
Let $k$ be a field and $k(x)$ a field of rational functions. 
Define two ring endmorphisms:
\[
\Pcal \colon k(x) \to k(x); \quad x \mapsto (1-x), \qquad
\Kcal \colon k(x) \to k(x); \quad x \mapsto x^{-1}.
\]
Define also a $k$-linear map
\[
\Jcal \colon k(x) \to k(x); \quad f(x) \mapsto x^{m-1} \cdot \Kcal(f(x)).
\]

Let $V_m := \langle 1, x, \dots, x^{m-1} \rangle$ be a $k$-linear subspace of $k(x)$.
Then the restriction of $\Pcal$ and $\Jcal$ on $V_m$
are $k$-linear endomorphisms.
We find that the representation matrix of $\Pcal$ restricted to $V_m$
has coefficients of $P'_m$ defined as
\[
P'
= P'_m
:= \left((-1)^{i-1} \binom{j-1}{i-1}\right)_{i,j=1}^m
\]
Note that $P=P'$ in modulo $2$.
It is clear that the representation matrix of $\Jcal$ restricted to $V_m$ is $J_m$.
We will show equalities between matrices
via showing corresponding equalities between $k$-linear endomorphisms on $k(x)$.

For two $k$-ring endomorphisms $\Fcal_1, \Fcal_2 \colon k(x) \to k(x)$,
$\Fcal_1 = \Fcal_2$ holds if and only if $\Fcal_1(x) = \Fcal_2(x)$ holds,
since $k(x)$ is generated by $x$ as a ring 
(to be precise we need to consider $x^{-1}$ as well,
but the inverse element is preserved by a ring homomorphism).
From this property we have
\begin{equation}\label{eq:P2K2PKPKPK}
\Pcal^2 = \Kcal^2 = \Pcal \Kcal \Pcal \Kcal \Pcal \Kcal = \id_{k(x)},
\end{equation}
since all of them map $x$ to itself.
Thus, by restricting $\Pcal^2 = \id_{k(x)}$ on $V_m$, we have $P'^2=I$.
Hence $P^2=I$ in $\FtwoMat$.

We now show $(PJ)^3=I$ in $\FtwoMat$.
For $a \in k(x)$, we define the multiplication map
\[
(a \times) \colon k(x) \to k(x), \qquad f(x) \mapsto af(x).
\]
Then
\[
\Pcal \circ (a \times) = \Pcal(a) \cdot \Pcal \quad and \quad
\Kcal \circ (a \times) = \Kcal(a) \cdot \Kcal
\]
hold. Using this property and \eqref{eq:P2K2PKPKPK}, we have
\begin{align*}
\Pcal \Jcal \Pcal \Jcal \Pcal \Jcal
&= \Pcal\circ(x^{m-1}\times)\circ\Kcal\Pcal\circ(x^{m-1}\times)\circ\Kcal\Pcal\circ(x^{m-1}\times)\circ\Kcal \\
&= \Pcal(x^{m-1}) \cdot \Pcal\Kcal\Pcal(x^{m-1}) \cdot \Pcal\Kcal\Pcal\Kcal\Pcal(x^{m-1}) \cdot \Pcal\Kcal\Pcal\Kcal\Pcal\Kcal\\
&= (-1)^{m-1} \id_{k(x)}.
\end{align*}
By restricting above to $V_m$, whenever $k$ has characteristic $2$ we have
\[
(PJ)^3=I,
\]
as we wanted.
\end{proof}

We now show that the matrix $PJ$ is what we want.
\begin{lemma}\label{lem:explicit-construction}
For any positive integer $m$, we have
\[
t(I_m,P_mJ_m,(P_mJ_m)^2) = 0.
\]
\end{lemma}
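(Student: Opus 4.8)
The plan is to deduce $t(I_m,P_mJ_m,(P_mJ_m)^2)=0$ from the already-established value $t(I_m,P_m,J_m)=0$ of \eqref{eq:0m3net-example}, by realizing the triple $(I,PJ,(PJ)^2)$ as an admissible transform of the Faure triple $(I,P,J)$ in the sense of Lemma~\ref{lem:tval-invariant}. The first step is to introduce the matrix $L:=JPJ$ and to observe that $L\in\Lset$: since $P$ is unit upper-triangular and conjugation by the anti-diagonal matrix $J$ (note $J^{-1}=J$) reverses the ordering of both rows and columns, $JPJ$ is unit lower-triangular.

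Next I would use Lemma~\ref{lem:PJPJPJ} to rewrite the relevant products. From $(PJ)^3=I$ we get $PJPJPJ=I$, i.e.\ $(PJP)(JPJ)=I$, so $L=JPJ=(PJP)^{-1}$; and since $P^2=J^2=I$ we also have $(PJP)^{-1}=PJP$, whence $L=PJP$ as well. Consequently $LP=PJP\cdot P=PJ$ and $LJ=JPJ\cdot J=JP$. Finally, $(PJ)(JP)=I$ gives $JP=(PJ)^{-1}$, and $(PJ)^3=I$ then gives $JP=(PJ)^2$, so $LJ=(PJ)^2$.

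Now I would invoke Lemma~\ref{lem:tval-invariant} with $s=3$, generating matrices $(C_1,C_2,C_3)=(I,P,J)$, left multipliers $(L_1,L_2,L_3)=(I,L,L)$ (all in $\Lset$), and right multiplier $G=I$. This yields
\[
t(I,P,J)=t\bigl(I\cdot I\cdot I,\;LP\cdot I,\;LJ\cdot I\bigr)=t\bigl(I,PJ,(PJ)^2\bigr),
\]
and the left-hand side equals $0$ by \eqref{eq:0m3net-example}, which is exactly the claimed identity.

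There is no genuinely hard step here; the only thing to get right is the bookkeeping with $P$, $J$, and $PJ$ — in particular checking that $L=JPJ$ lies in $\Lset$ and that a single left multiplication by $L$ sends $P$ to $PJ$ and $J$ to $(PJ)^2$. Both facts are immediate from Lemma~\ref{lem:PJPJPJ} and the triangular shape of $P$. The one point worth emphasizing is that no right multiplication is needed (we take $G=I$): the net generated by $(I,PJ,(PJ)^2)$ differs from the Faure net only by a single lower-triangular change of coordinates applied to the second and third components, which is precisely the operation that Lemma~\ref{lem:tval-invariant} is designed to absorb.
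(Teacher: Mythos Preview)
Your proof is correct and follows essentially the same route as the paper's: both deduce the result from \eqref{eq:0m3net-example} via Lemma~\ref{lem:tval-invariant}, using that $JPJ\in\Lset$ together with the identities $P^2=J^2=(PJ)^3=I$ of Lemma~\ref{lem:PJPJPJ}. The only cosmetic difference is in the bookkeeping: the paper chooses $G=J$ and lands on the reordered Faure triple $(J,P,I)$, while you take $G=I$ and $(L_1,L_2,L_3)=(I,JPJ,JPJ)$ to land directly on $(I,P,J)$.
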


\begin{proof}
Lemma~\ref{lem:PJPJPJ} implies $(PJP)^{-1}=JPJ$. Further $JPJ \in \Lset$ holds.
Hence by Lemma~\ref{lem:tval-invariant} with $(L_1,L_2,L_3) = (J,I,JPJ)$ and $G=J$ we have
\[
t(I,PJ,(PJ)^2) = t(J,P,I) = 0,
\]
where the last equality follows from \eqref{eq:0m3net-example}.
\end{proof}

\section{Proof of Theorem~\ref{thm:IBB2-structure}}

To prove Theorem~\ref{thm:IBB2-structure},
we need the following lemmas which will be shown in Section~\ref{sec:proof}.
\begin{lemma}\label{lem:IB-0m2net-structure}
Let $B \in \FtwoMat$. Then the following are equivalent.
\begin{enuroman}
\item \label{eq:IB-structure-equiv1}
$t(I,B)=0$.
\item \label{eq:IB-structure-equiv2}
There exist $L_1, L_2 \in \Lset$
such that $B = L_1 J L_2$.
\end{enuroman}
\end{lemma}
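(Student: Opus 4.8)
The plan is to unwind the definition of $t(I,B)=0$ via Lemma~\ref{lem:tval-linear-indpendence}. Writing $\bsb^1, \dots, \bsb^m$ for the rows of $B$ and $\bse_1, \dots, \bse_m$ for the standard basis rows (the rows of $I$), the condition $t(I,B)=0$ says that for every pair $d_1 + d_2 = m$ with $d_1, d_2 \geq 0$, the rows $\bse_1, \dots, \bse_{d_1}, \bsb^1, \dots, \bsb^{d_2}$ are linearly independent, hence form a basis of $\Ftwo^m$. Taking $d_1 = m - k$ and $d_2 = k$ for each $0 \leq k \leq m$, this is equivalent to saying: for every $k$, the images of $\bsb^1, \dots, \bsb^k$ in the quotient $\Ftwo^m / \langle \bse_1, \dots, \bse_{m-k}\rangle$ are linearly independent. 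In concrete terms, $\bsb^1, \dots, \bsb^k$ together with $\bse_1, \dots, \bse_{m-k}$ span, which (by dimension count) is the same as the $k \times k$ lower-right (anti-triangular) minor condition on $B$. I expect this to translate cleanly into: the matrix $B$, after reversing the order of its columns, has all leading principal minors nonzero, i.e.\ $BJ$ admits an LU-type factorization.

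More precisely, the direction \ref{eq:IB-structure-equiv2}$\Rightarrow$\ref{eq:IB-structure-equiv1} is the easy one: if $B = L_1 J L_2$ with $L_1, L_2 \in \Lset$, then by Lemma~\ref{lem:tval-invariant} (with $G = L_2^{-1}$, and multiplying the first component by $L_1^{-1} \in \Lset$) we get $t(I,B) = t(L_1^{-1} I L_2^{-1}, L_1^{-1} B L_2^{-1}) = t(L_2^{-1}, J)$, and since $L_2^{-1} \in \Lset$, multiplying the first component by $L_2 \in \Lset$ gives $t(I, J)$. The point set generated by $(I,J)$ is easily seen to be a $(0,m,2)$-net — it is the two-dimensional Hammersley/van der Corput construction — so $t(I,J) = 0$.

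For the converse \ref{eq:IB-structure-equiv1}$\Rightarrow$\ref{eq:IB-structure-equiv2}, I would argue as follows. From the linear independence conditions above, for each $k$ the first $k$ rows of $B$ are independent modulo $\langle \bse_1, \dots, \bse_{m-k}\rangle$; equivalently, the submatrix of $B$ consisting of rows $1, \dots, k$ and columns $m-k+1, \dots, m$ is invertible for every $k$. This is exactly the statement that every leading principal submatrix of $BJ$ (reversing columns swaps column block $\{m-k+1,\dots,m\}$ to $\{1,\dots,k\}$) is invertible. A standard fact over any field is that a matrix all of whose leading principal minors are nonzero admits a factorization as (lower unitriangular)$\times$(upper triangular); over $\Ftwo$ the diagonal of the upper-triangular factor is forced to be all ones, so we can write $BJ = L_1 U$ with $L_1 \in \Lset$ and $U \in \Uset$. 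Then $B = L_1 U J = L_1 (UJ)$, and the final step is the observation that $UJ = J(J U J)$ with $JUJ$ being the ``flip'' of an upper-triangular matrix, hence lower-triangular; setting $L_2 = JUJ \in \Lset$ gives $B = L_1 J L_2$ as desired.

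The main obstacle I anticipate is the bookkeeping in step one: carefully checking that the family of independence conditions in Lemma~\ref{lem:tval-linear-indpendence} (quantified over all $d_1 + d_2 = m$) is precisely equivalent to ``all leading principal minors of $BJ$ are nonzero,'' rather than something slightly weaker or stronger. One has to be attentive that the relevant rows in the $t$-value criterion are the \emph{first} $d_i$ rows of each matrix (not arbitrary subsets), and that reversing columns via $J$ correctly converts the anti-triangular minors into ordinary leading principal ones. The LU-factorization fact and the triangular-flip identity $JUJ \in \Lset$ are routine, so the crux is getting this dictionary exactly right.
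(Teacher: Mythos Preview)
Your proposal is correct and follows essentially the same route as the paper: both directions hinge on showing that $t(I,B)=0$ is equivalent to all leading principal minors of $BJ$ being invertible, then invoking an LU factorization and the flip identity $J U J \in \Lset$. The only cosmetic difference is that the paper first passes to $t(J,BJ)=0$ (via Lemma~\ref{lem:tval-invariant}) before reading off the minor condition, whereas you extract it directly from $t(I,B)=0$; the content is identical.
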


\begin{lemma}\label{lem:0m3net-structure}
Let $A,B,C,C' \in \FtwoMat$. 
Suppose that $t(A,B,C) = t(A,B,C') = 0$.
Then there exists $L \in \Lset$
such that $LC = C'$.
\end{lemma}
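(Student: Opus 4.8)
The plan is to reduce, using Lemmas~\ref{lem:tval-invariant} and~\ref{lem:IB-0m2net-structure}, to the case $A = I$ and $B = J$, then to rewrite $t(I,J,C) = 0$ as an invertibility condition on contiguous square submatrices of $C$, and finally to count the admissible $C$ and conclude from the $\Lset$-action supplied by Lemma~\ref{lem:tval-invariant}. For the reduction: $t(A,B,C) = 0$ forces $A$ to be non-singular, since the projection of the net to its first coordinate is a digital $(0,m,1)$-net over $\Ftwo$, whose generating matrix must be invertible. Using Lemma~\ref{lem:tval-invariant} with $G = A^{-1}$, we replace $(A,B,C)$ and $(A,B,C')$ by $(I,BA^{-1},CA^{-1})$ and $(I,BA^{-1},C'A^{-1})$; since $C' = LC$ is equivalent to $C'A^{-1} = L(CA^{-1})$, we may assume $A = I$. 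Projecting the $(0,m,3)$-net generated by $(I,B,C)$ onto its first two coordinates gives $t(I,B) = 0$, so $B = L_1 J L_2$ with $L_1,L_2 \in \Lset$ by Lemma~\ref{lem:IB-0m2net-structure}; a second application of Lemma~\ref{lem:tval-invariant}, left-multiplying the three generators by $L_2$, $L_1^{-1}$, $I$ respectively and right-multiplying all of them by $L_2^{-1}$, replaces $(I,B,C)$ and $(I,B,C')$ by $(I,J,CL_2^{-1})$ and $(I,J,C'L_2^{-1})$. As this changes $C$ and $C'$ only by a common non-singular right factor, it suffices to prove the lemma when $A = I$ and $B = J$.

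Using that Lemma~\ref{lem:tval-linear-indpendence} is in fact an equivalence (this is \cite[Theorem~4.52]{Dick2010dna}), $t(I,J,C) = 0$ says that for all $a,b,c \ge 0$ with $a+b+c = m$, the vectors $e_1,\dots,e_a$ (first $a$ rows of $I$), $e_m,\dots,e_{m-b+1}$ (first $b$ rows of $J$), and the first $c$ rows of $C$ are linearly independent in $\Ftwo^m$. Eliminating the standard basis vectors from this condition, one sees that it is equivalent to the following: for all integers $1 \le c \le m$ and $0 \le a \le m-c$, the $c \times c$ submatrix $C^{(c,a)}$ of $C$ consisting of rows $1,\dots,c$ and columns $a+1,\dots,a+c$ is non-singular.

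I would then count the matrices $C \in \FtwoMat$ meeting all of these conditions, building $C$ one row at a time. The conditions with $c = 1$ force the first row of $C$ to be $(1,\dots,1)$. Suppose rows $1,\dots,r-1$ are already chosen so that every condition with $c \le r-1$ holds. For $c = r$, the condition indexed by $a$ (with $0 \le a \le m-r$) is a single linear equation $\bar\ell_a(\text{row } r) = 1$ on the $r$-th row, because a determinant is linear in its last row. The crucial point is that $\bar\ell_0,\dots,\bar\ell_{m-r}$ are linearly independent: $\bar\ell_a$ involves only the entries in columns $a+1,\dots,a+r$, and cofactor expansion along the last row of $C^{(r,a)}$ shows that the coefficient of the column-$(a+1)$ entry in $\bar\ell_a$ equals $\pm\det C^{(r-1,a+1)}$, which is non-zero by the induction hypothesis (with the convention $\det C^{(0,a)} = 1$, which also covers $r = 1$); the resulting staircase pattern forces every coefficient in any linear relation among the $\bar\ell_a$ to vanish. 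Hence the system on row $r$ has exactly $2^{m - (m-r+1)} = 2^{r-1}$ solutions, each of which extends rows $1,\dots,r-1$ to rows $1,\dots,r$ satisfying all conditions with $c \le r$. Therefore the number of admissible $C$ is $\prod_{r=1}^m 2^{r-1} = 2^{m(m-1)/2} = |\Lset|$.

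To conclude, set $\mathcal S = \{C \in \FtwoMat : t(I,J,C) = 0\}$. The previous step gives $|\mathcal S| = |\Lset|$ and, since the construction always extends, $\mathcal S \ne \emptyset$; moreover each $C \in \mathcal S$ is non-singular (the case $c = m$, $a = 0$), and $LC \in \mathcal S$ for all $L \in \Lset$ by Lemma~\ref{lem:tval-invariant}, so $\mathcal S$ is a union of left $\Lset$-cosets inside $\GLset$. Each such coset has $|\Lset|$ elements, so $\mathcal S$ is a single coset; hence any two of its elements differ by left multiplication by an element of $\Lset$. Applying this to the reduced $C$ and $C'$ and undoing the reduction gives $C' = LC$ with $L \in \Lset$, as desired. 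I expect the main obstacle to be the linear-independence step for each successive row — which rests on identifying the relevant cofactors with the previously-controlled non-singular minors $C^{(r-1,a+1)}$ — together with keeping track of the left and right multipliers through the reduction; the remaining steps are routine.
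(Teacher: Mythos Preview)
Your argument is correct and complete: the reduction to $A=I$, $B=J$ is valid, the translation of $t(I,J,C)=0$ into the non-singularity of the contiguous minors $C^{(c,a)}$ is accurate, the row-by-row count goes through (the cofactor you single out is indeed $\det C^{(r-1,a+1)}$, nonzero by the inductive hypothesis, and the staircase pattern gives linear independence of the $\bar\ell_a$, so each stage has exactly $2^{r-1}$ extensions), and the free $\Lset$-action then forces $\mathcal S$ to be a single orbit.

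The paper, however, proceeds quite differently. It never reduces to $A=I$, $B=J$; instead it works directly with the row-spaces
\[
V_{i,j}=\langle \bsa_1,\dots,\bsa_i,\bsb_1,\dots,\bsb_{m-i-j-1},\bsc_1,\dots,\bsc_j\rangle,
\]
shows via a dimension computation and inclusion--exclusion that $\bigl|\bigcap_i V_{i,j}^c\bigr|=2^j$, and concludes that $\bsc_{j+1}+\langle \bsc_1,\dots,\bsc_j\rangle$ \emph{equals} this intersection of complements. Since the same holds for $C'$ (and $V_{i,j}=W_{i,j}$ by induction), one gets $\bsc'_{j+1}\in\bsc_{j+1}+\langle\bsc_1,\dots,\bsc_j\rangle$ directly, which is exactly the statement $C'=LC$ with $L\in\Lset$. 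Your approach trades this intrinsic subspace argument for a reduction (using Lemma~\ref{lem:IB-0m2net-structure}) plus an explicit enumeration and an orbit-counting punchline; it is more hands-on and makes the size $|\mathcal S|=|\Lset|$ visible, while the paper's route is coordinate-free, avoids any normalization of $A$ and $B$, and identifies the coset of each new row conceptually rather than by a cardinality coincidence.
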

Assuming the above lemmas, we show the main theorem.

\begin{proof}[Proof of Theorem~\ref{thm:IBB2-structure}]
First we assume \eqref{eq:IBB2-structure-equiv2}.
By Lemma~\ref{lem:tval-invariant} with $(L_1,L_2,L_3) = (L^{-1},L^{-1},L^{-1})$ and $G=L$
we have
\begin{align*}
t(I,B,B^2)
&= t(I, LPJL^{-1}, LPJPJL^{-1})\\
&= t(I, PJ, PJPJ) = 0.
\end{align*}
Here the last equality follows from Lemma~\ref{lem:explicit-construction}.
Hence \eqref{eq:IBB2-structure-equiv1} follows.

We now assume \eqref{eq:IBB2-structure-equiv1}.
By Lemma~\ref{lem:IB-0m2net-structure},
there exists $L_1, L_2 \in \Lset$
such that $B = L_1 J L_2$.
Then
by Lemma~\ref{lem:tval-invariant} with $(L_1,L_2,L_3) = (L_2^{-1},L_1,L_1)$ and $G=L_2$
we have\[
t(I ,J, JL_2L_1J)
= t(I, L_1JL_2, (L_1JL_2)^2)
= t(I, B, B^2)
= 0.
\]
On the other hand, from \eqref{eq:0m3net-example}
we have $t(I, J, P) =0$.
Hence it follows from  Lemma~\ref{lem:0m3net-structure}
that there exists $L_3 \in \Lset$
such that $L_3JL_2L_1J = P$
and thus $L_3  = P(JL_2L_1J)^{-1}$.
Since $L_3 \in \Lset$ and
$P(JL_2L_1J)^{-1} \in \Uset$ hold,
both are equal to $I$.
Thus $L_3 = I$ and $JL_2L_1J = P$ hold,
and the latter implies $L_2 = JPJL_1^{-1}$.
Hence
$B =  L_1JL_2 = L_1PJL_1^{-1}$,
which shows \eqref{eq:IBB2-structure-equiv2}.

We now assume that one of them holds
(and thus \eqref{eq:IBB2-structure-equiv2} holds).
Then there exist $L \in \Lset$ such that $B=LPJL^{-1}$.
Hence we have
\[
B^3 = (LPJL^{-1})^3 = L(PJ)^3 L^{-1} = LL^{-1} = I,
\]
where the the third equality follows from Lemma~\ref{lem:PJPJPJ}.
\end{proof}

\section{Proofs of lemmas}\label{sec:proof}
\subsection{Proof of Lemma~\ref{lem:IB-0m2net-structure}}
\begin{proof}[Proof of Lemma~\ref{lem:IB-0m2net-structure}]
First we assume \eqref{eq:IB-structure-equiv2}.
By Lemma~\ref{lem:tval-invariant} with $(L_1,L_2) = (L_2^{-1},L_1^{-1})$ and $G=L_2^{-1}$
we have
\[
t(I,B)
= t(I,  L_1 J L_2)
= t(I, J) = 0.
\]
and thus \eqref{eq:IB-structure-equiv1} follows.

We now assume \eqref{eq:IBB2-structure-equiv1}.
From this we have $t(J, BJ)= t(I, B) =0$.
From $t(J, BJ)=0$,
we can show that all of the leading principal minor matrices of $BJ$ are non-singular.
Hence there exist $L \in \Lset$ and $U \in \Uset$ such that $LBJU = I$.
Thus we have
\[
B
= L^{-1}U^{-1}J
= L^{-1}J^2U^{-1}J
= L^{-1}J(JU^{-1}J).
\]
This shows \eqref{eq:IBB2-structure-equiv1} since $JU^{-1}J \in \Lset$.
\end{proof}

\subsection{Proof of Lemma~\ref{lem:0m3net-structure}}
Here we prove two lemmas to show Lemma~\ref{lem:0m3net-structure}.

Let us denote
\[
A =
\begin{pmatrix}
\bsa_1\\
\bsa_2\\
\vdots\\
\bsa_m
\end{pmatrix},
\qquad
B =
\begin{pmatrix}
\bsb_1\\
\bsb_2\\
\vdots\\
\bsb_m
\end{pmatrix},
\qquad
C =
\begin{pmatrix}
\bsc_1\\
\bsc_2\\
\vdots\\
\bsc_m
\end{pmatrix},
\qquad
C' =
\begin{pmatrix}
\bsc'_1\\
\bsc'_2\\
\vdots\\
\bsc'_m
\end{pmatrix}.
\]

\begin{lemma}\label{lem:dim-cardinality-bsc}
Let $A,B,C \in \FtwoMat$ and assume that $t(A,B,C)=0$.
For $i,j \in \bN$ with $i+j \leq m-1$,
we define a subspace $V_{i,j}$ of $\Ftwo^{1 \times m}$ as
\[
V_{i,j} := \langle \bsa_1, \dots, \bsa_i, \bsb_1, \dots, \bsb_{m-i-j-1}, \bsc_1, \dots, \bsc_j \rangle.
\]
Let $1 \leq k \leq m-j$ and $0 \leq  i_1 < \cdots < i_k \leq m-1-j$ be integers.
Then the following holds true.
\begin{align}
\dim \bigcap_{l=1}^k V_{i_l, j} = m-k, \label{eq:cardinality-cap} \\
\left| \bigcap_{0 \leq i \leq m-j-1} V_{i,j}^c \right| = 2^j \label{eq:cardinality-bsc2}.
\end{align}
\end{lemma}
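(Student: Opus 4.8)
The plan is to derive both equalities from the linear-independence characterization of $t$-value zero (Lemma~\ref{lem:tval-linear-indpendence}). Recall that $t(A,B,C)=0$ means exactly that for every triple of nonnegative integers $(d_1,d_2,d_3)$ with $d_1+d_2+d_3=m$, the $m$ row vectors $\bsa_1,\dots,\bsa_{d_1},\bsb_1,\dots,\bsb_{d_2},\bsc_1,\dots,\bsc_{d_3}$ form a basis of $\Ftwo^{1\times m}$. The subspace $V_{i,j}$ uses $d_1=i$, $d_2=m-i-j-1$, $d_3=j$, which sum to $m-1$; so $V_{i,j}$ is spanned by $m-1$ vectors that are a subset of such a basis (take $d_2'=m-i-j$ to see it), hence $\dim V_{i,j}=m-1$ for each admissible $i$ with $0\le i\le m-1-j$. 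Thus each $V_{i,j}$ is a hyperplane.

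For \eqref{eq:cardinality-cap} I would argue by induction on $k$. The case $k=1$ is the hyperplane statement just established. For the inductive step, suppose $\dim\bigcap_{l=1}^{k-1}V_{i_l,j}=m-k+1$; since $V_{i_k,j}$ is a hyperplane, the intersection with it drops the dimension by at most $1$, so it suffices to exhibit a vector in $\bigcap_{l=1}^{k-1}V_{i_l,j}$ but not in $V_{i_k,j}$. Here I expect to use that the $i_l$ are strictly increasing: taking $i=i_{k-1}$ (the largest among $i_1,\dots,i_{k-1}$, all of which are $\le i_k-1$ hence $\le m-1-j$), the vector $\bsb_{m-i_{k-1}-j}$ — which is the "next" $B$-row beyond those generating $V_{i_{k-1},j}$ — lies in every $V_{i_l,j}$ for $l\le k-1$ (since $m-i_l-j-1\ge m-i_{k-1}-j-1$... care is needed with the direction of the inequality, so the right vector to pick is governed by which $m-i_l-j-1$ is largest, i.e. $i_l$ smallest, namely $i_1$; I would pick $\bsb_{m-i_1-j}$ and check it is in all $V_{i_l,j}$, $l\le k-1$, but not in $V_{i_k,j}$, using that $\{\bsa_1,\dots,\bsa_{i_k},\bsb_1,\dots,\bsb_{m-i_k-j},\bsc_1,\dots,\bsc_j\}$ is independent of cardinality $m-k+1+\cdots$). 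The bookkeeping of exactly which row to pick, and verifying it lies in the required hyperplanes via the $t=0$ basis property applied to a suitable $(d_1,d_2,d_3)$, is the main obstacle; everything else is dimension counting.

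For \eqref{eq:cardinality-bsc2}, I would apply \eqref{eq:cardinality-cap} with the full range $k=m-j$ and $(i_1,\dots,i_k)=(0,1,\dots,m-1-j)$, giving $\dim\bigcap_{i=0}^{m-j-1}V_{i,j}=j$. By inclusion–exclusion over complements, $\bigl|\bigcap_{i}V_{i,j}^c\bigr|=\bigl|\Ftwo^{1\times m}\setminus\bigcup_i V_{i,j}\bigr|$; but more directly, a vector avoiding every $V_{i,j}$ is the same as... actually the clean route is: $\bigcap_i V_{i,j}$ has $2^j$ elements, and I claim $\bigcap_i V_{i,j}\subseteq\bigcap_i V_{i,j}^c$ is false in general, so instead I will show the complement count equals $2^j$ by a direct bijection or by noting the $V_{i,j}$ form a "sunflower" whose union has size $2^m-2^j$. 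Concretely, since the $m-j$ hyperplanes $V_{i,j}$ all contain the common $j$-dimensional core $W:=\bigcap_i V_{i,j}$ and are distinct, working in the quotient $\Ftwo^{1\times m}/W\cong\Ftwo^{m-j}$ they become $m-j$ distinct hyperplanes through the origin; a vector lies outside all of them iff its image lies outside all $m-j$ coordinate-like hyperplanes, and I would verify (again from the $t=0$ basis property) that these $m-j$ hyperplanes in $\Ftwo^{m-j}$ are in "general position" so that exactly one nonzero quotient class avoids all of them, yielding $|W|\cdot 1=2^j$ vectors. Establishing that the quotiented hyperplanes are the coordinate hyperplanes (equivalently, that $\bigcap_{i\ne i_0}V_{i,j}\not\subseteq V_{i_0,j}$ for each $i_0$) follows from \eqref{eq:cardinality-cap} with $k=m-j-1$ and $k=m-j$, so this reduces to the already-proved part.
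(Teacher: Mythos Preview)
Your overall strategy for \eqref{eq:cardinality-cap} is the paper's: induct on $k$ and exhibit a witness vector lying in the running intersection but outside the new hyperplane. The gap is that both witnesses you propose are wrong. With your setup $U=\bigcap_{l=1}^{k-1}V_{i_l,j}$, a row $\bsb_r$ lies in every $V_{i_l,j}$ for $l\le k-1$ only if $r\le m-i_l-j-1$ for \emph{all} such $l$, i.e.\ $r\le m-i_{k-1}-j-1$; your choice $r=m-i_1-j$ violates this already for $l=1$ (indeed $\bsb_{m-i_1-j}\notin V_{i_1,j}$ by the $t=0$ condition with $(d_1,d_2,d_3)=(i_1,m-i_1-j,j)$), and $r=m-i_{k-1}-j$ violates it for $l=k-1$ in the same way. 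The correct choice under your setup is $r=m-i_k-j$: then $r\le m-i_{k-1}-j-1$ since $i_k>i_{k-1}$, and $\bsb_r\notin V_{i_k,j}$ because adjoining it to the generators of $V_{i_k,j}$ gives the independent $m$-tuple with $(d_1,d_2,d_3)=(i_k,m-i_k-j,j)$. The paper avoids this bookkeeping by reversing the roles: set $U=\bigcap_{l=2}^{k}V_{i_l,j}$ and take the witness $\bsa_{i_1+1}$, which lies in each $V_{i_l,j}$ for $l\ge 2$ (since $i_l\ge i_1+1$) and lies outside $V_{i_1,j}$ (adjoin it to get the independent $m$-tuple with $(d_1,d_2,d_3)=(i_1+1,m-i_1-j-1,j)$). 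Then $\dim(U+V_{i_1,j})=m$ and the dimension formula gives $\dim(U\cap V_{i_1,j})=m-k$.

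For \eqref{eq:cardinality-bsc2} you abandon inclusion--exclusion too quickly. The paper applies it directly: by \eqref{eq:cardinality-cap} every nonempty $S\subset\{0,\dots,m-j-1\}$ satisfies $\bigl|\bigcap_{i\in S}V_{i,j}\bigr|=2^{m-|S|}$, so
\[
\Bigl|\bigcap_{i=0}^{m-j-1}V_{i,j}^c\Bigr|
=\sum_{k=0}^{m-j}(-1)^k\binom{m-j}{k}2^{m-k}
=2^j(2-1)^{m-j}=2^j.
\]
Your quotient argument can be completed (in $\Ftwo^{1\times m}/W$ the images are $m-j$ hyperplanes with every $k$-fold intersection of dimension $m-j-k$ by \eqref{eq:cardinality-cap}, hence after a change of basis they are the coordinate hyperplanes and exactly one vector avoids them all), but it is a detour that still rests on the same intersection dimensions.
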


\begin{proof}
First we show \eqref{eq:cardinality-cap} by induction on $k$.
The assumption that $t(A,B,C) = 0$ implies that
$\dim V_{i,j} = m-1$ for all $i$ and $j$. This shows the lemma for $k=1$.
We now assume the lemma for $k-1$ and show for $k$.
Fix $0 \leq  i_1 < \cdots < i_k \leq m-1-j$ and 
let $U := \bigcap_{l=2}^k V_{i_l, j}$.
It follows from $t(A,B,C)=0$ that
$\bsa_{i_1+1} \notin V_{i_1, j}$.
Combining this with $\bsa_{i_1+1} \in U$, we have
\[
m = 1 + \dim V_{i_1, j} \leq \dim (U + V_{i_1, j}) \leq m,
\]
which shows $\dim (U + V_{i_1, j}) = m$.
Further we have $\dim U = m-k+1$ by induction assumption.
Thus we have
\begin{align*}
\dim (U \cap V_{i_1, j})
&= \dim U + \dim V_{i_1,j} - \dim (U + V_{i_1, j})\\
&= (m-k+1) + (m-1) - m\\
&= m-k.
\end{align*}
This shows the lemma for $k$.

Now we show \eqref{eq:cardinality-bsc2}.
By \eqref{eq:cardinality-cap} and the inclusion-exclusion principle, we have
\begin{align*}
\left| \bigcap_{0 \leq i \leq m-j-1} V_{i,j}^c \right|
&= 
\left| \Ftwo^{1 \times m} \right|
- \sum_{\emptyset \neq S \subset \{0, 1, \dots, m-j-1\}}(-1)^{|S|} \left| \bigcap_{i \in S} V_{i,j} \right|\\
&= 2^m - \sum_{\emptyset \neq S \subset \{0, 1, \dots, m-j-1\}}(-1)^{|S|} 2^{m-|S|}\\
&= 2^m - \sum_{k=1}^{m-j}(-1)^k 2^{m-k} \sum_{\emptyset \neq S \subset \{0, \dots, m-j-1\}, |S|=k} 1\\
&= 2^m - \sum_{k=1}^{m-j}(-1)^k 2^{m-k} \binom{m-j}{k} \\
&= 2^j \sum_{k=0}^{m-j}(-1)^k 2^{m-j-k} \binom{m-j}{k} \\
&= 2^j (2-1)^{m-j} \\
&= 2^j.
\end{align*}
This shows \eqref{eq:cardinality-bsc2}.
\end{proof}

\begin{lemma}\label{lem:0m3-structure}
Under the assumption and notation of Lemma~\ref{lem:dim-cardinality-bsc},
we further assume that $C' \in \FtwoMat$ and $t(A,B,C')=0$.
For $i,j \in \bN$ with $i+j \leq m-1$ we define a subspace $W_{i,j}$ of $\Ftwo^{1 \times m}$ as
\[
W_{i,j} := \langle \bsa_1, \dots, \bsa_i, \bsb_1, \dots, \bsb_{m-i-j-1}, \bsc'_1, \dots, \bsc'_j \rangle.
\]
Then the following holds true.
\begin{enuroman}
\item
$\bsc'_{j} \in \bsc_{j} + \langle \bsc_1, \dots, \bsc_{j-1} \rangle$ for $j \geq 1$ \label{eq:0m3-structure-1},
\item
$\langle \bsc_1, \dots, \bsc_j \rangle = \langle \bsc'_1, \dots, \bsc'_j \rangle$ for $j \geq 1$ \label{eq:0m3-structure-12}
\item $V_{i,j} = W_{i,j}$ for all $i$ and $j$, \label{eq:0m3-structure-2}
\end{enuroman}
\end{lemma}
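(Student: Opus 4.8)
The plan is to prove (i), (ii) and (iii) together by induction on $j$, establishing (iii) first in each inductive step and then reading off (ii) and (i). The base case $j=0$ is trivial, as $V_{i,0}$ and $W_{i,0}$ involve no columns of $C$ or $C'$. Throughout I use the standard linear-algebra description of the $t$-value behind Lemma~\ref{lem:tval-linear-indpendence}, also used in the proof of Lemma~\ref{lem:dim-cardinality-bsc}: $t(A,B,C)=0$ means that for every $d_1+d_2+d_3=m$ the collection $\{\bsa_1,\dots,\bsa_{d_1},\bsb_1,\dots,\bsb_{d_2},\bsc_1,\dots,\bsc_{d_3}\}$ is a basis of $\Ftwo^{1\times m}$, hence every sub-collection of this shape is linearly independent; similarly for $C'$.

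Fix $j\ge 1$ and assume the lemma for all smaller indices; in particular $V_{i,j-1}=W_{i,j-1}$ for all relevant $i$, and $\langle\bsc_1,\dots,\bsc_{j-1}\rangle=\langle\bsc'_1,\dots,\bsc'_{j-1}\rangle$. For $0\le i\le m-1-j$ set $U_{i,j}:=\langle\bsa_1,\dots,\bsa_i,\bsb_1,\dots,\bsb_{m-i-j-1},\bsc_1,\dots,\bsc_{j-1}\rangle$, a space of dimension $m-2$; then $V_{i,j}=U_{i,j}\oplus\langle\bsc_j\rangle$, $V_{i,j-1}=U_{i,j}\oplus\langle\bsb_{m-i-j}\rangle$ and $V_{i+1,j-1}=U_{i,j}\oplus\langle\bsa_{i+1}\rangle$. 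For any two of these three hyperplanes, adjoining to $U_{i,j}$ both of the corresponding ``extra'' vectors produces one of the basis collections above; hence any two of the three are distinct, span $\Ftwo^{1\times m}$, and therefore (by counting dimensions) intersect in exactly $U_{i,j}$.

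Now pass to the dual space: over $\Ftwo$ each hyperplane is the kernel of a unique nonzero functional, and a nonzero functional is determined by its kernel. Let $f_i$ and $\tilde f_i$ cut out $V_{i,j}$ and $V_{i,j-1}$. Restricted to $V_{i,j-1}$, both $f_i$ and $\tilde f_{i+1}$ are nonzero with kernel $V_{i,j}\cap V_{i,j-1}=U_{i,j}=V_{i,j-1}\cap V_{i+1,j-1}$, hence they agree on $V_{i,j-1}$; so $f_i+\tilde f_{i+1}$ vanishes on $V_{i,j-1}$, giving $f_i=\tilde f_{i+1}+c\,\tilde f_i$ with $c\in\Ftwo$. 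Since $c=0$ would force $V_{i,j}=V_{i+1,j-1}$, which was excluded, $f_i=\tilde f_i+\tilde f_{i+1}$ for every $i$. The same argument for $C'$ — using $W_{i,j-1}=V_{i,j-1}$, so that the functionals $\tilde f_i$ are literally the same — shows $W_{i,j}$ is cut out by $\tilde f_i+\tilde f_{i+1}$ too; hence $W_{i,j}=V_{i,j}$ for all $i$, which is (iii). Then (ii) follows because $\langle\bsc_1,\dots,\bsc_j\rangle\subseteq\bigcap_{0\le i\le m-1-j}V_{i,j}$ with both sides of dimension $j$ (the intersection by Lemma~\ref{lem:dim-cardinality-bsc}), similarly $\langle\bsc'_1,\dots,\bsc'_j\rangle=\bigcap_{0\le i\le m-1-j}W_{i,j}$, and (iii) identifies the two spans. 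Finally (i) follows: $\bsc'_j\in\langle\bsc'_1,\dots,\bsc'_j\rangle=\langle\bsc_1,\dots,\bsc_j\rangle$, and since $\bsc'_1,\dots,\bsc'_j$ are linearly independent while $\langle\bsc'_1,\dots,\bsc'_{j-1}\rangle=\langle\bsc_1,\dots,\bsc_{j-1}\rangle$, the $\bsc_j$-coefficient of $\bsc'_j$ must be $1$.

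The step I expect to demand the most care is the passage to functionals and the identity $f_i=\tilde f_i+\tilde f_{i+1}$: one must check that each subspace in play has exactly the claimed dimension, so that ``hyperplane'' and ``codimension one'' are legitimate; that the degenerate coincidences $V_{i,j}=V_{i,j-1}$, $V_{i,j}=V_{i+1,j-1}$, $V_{i,j-1}=V_{i+1,j-1}$ are genuinely ruled out; and that the index ranges line up so that $V_{i,j-1}=W_{i,j-1}$ and all three intersection identities hold for precisely the $i$ that appear. The dimension counts themselves are routine consequences of $t(A,B,C)=t(A,B,C')=0$.
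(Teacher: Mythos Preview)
Your proof is correct, and it takes a genuinely different route from the paper's. Both arguments induct on $j$ with the same trivial base case $j=0$, but the inductive steps are organised differently. The paper first proves (i) (and hence (ii)) for $j+1$ and only then reads off (iii): using \eqref{eq:cardinality-bsc2} it shows that the coset $\bsc_{j+1}+\langle\bsc_1,\dots,\bsc_j\rangle$ coincides with $\bigcap_i V_{i,j}^c$ by an inclusion-and-cardinality argument, does the same for $\bsc'_{j+1}$ with the $W_{i,j}$'s, and then invokes $V_{i,j}=W_{i,j}$ from the induction hypothesis to conclude the two cosets agree. Your argument reverses the order: you establish (iii) at level $j$ directly by the dual-space identity $f_i=\tilde f_i+\tilde f_{i+1}$, which shows that $V_{i,j}$ is \emph{determined} by the hyperplanes $V_{i,j-1}$ and $V_{i+1,j-1}$ alone, and since these coincide with the corresponding $W$'s by induction, so must $V_{i,j}$ and $W_{i,j}$; only then do you deduce (ii) via $\bigcap_i V_{i,j}$ and \eqref{eq:cardinality-cap}, and finally (i). Your approach is more local (three hyperplanes at a time rather than all of them) and uses only the dimension formula \eqref{eq:cardinality-cap} from Lemma~\ref{lem:dim-cardinality-bsc}, never the inclusion--exclusion count \eqref{eq:cardinality-bsc2}; the paper's approach is more combinatorial and leans on \eqref{eq:cardinality-bsc2} as the key input. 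The caveats you flag in your final paragraph are exactly the right ones, and they are all handled by the $t$-value-zero hypothesis as you indicate.
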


\begin{proof}
We show the lemma by induction on $j$.
When $j=0$, trivially $V_{i,0} = W_{i,0}$.
We now assume the claim for $j$ and show for $j+1$.
It follows from $t(A,B,C) = 0$ that $\bsc_{j+1} \notin V_{i,j}$ for all $i$.
Further we have $\langle \bsc_1, \dots, \bsc_j \rangle \subset V_{i,j}$ for all $i$.
Hence
\begin{equation}\label{eq:inclusion-infact-equal}
\bsc_{j+1} + \langle \bsc_1, \dots, \bsc_j \rangle \subset \bigcap_{0 \leq i \leq m-j-1} V_{i,j}^c.
\end{equation}
The cardinality of the left hand side is $2^j$,
and that of the right hand side is also $2^j$ from Lemma~\ref{lem:dim-cardinality-bsc}.
Thus we have
\[
\bsc_{j+1} + \langle \bsc_1, \dots, \bsc_j \rangle = \bigcap_{0 \leq i \leq m-j-1} V_{i,j}^c.
\]
In the same way, it holds that 
\[
\bsc'_{j+1} + \langle \bsc'_1, \dots, \bsc'_j \rangle
= \bigcap_{0 \leq i \leq m-j-1} W_{i,j}^c
= \bigcap_{0 \leq i \leq m-j-1} V_{i,j}^c.
\]
where the last equality follows from induction assumption.
Hence we have
\[
\bsc_{j+1} + \langle \bsc_1, \dots, \bsc_j \rangle
= \bsc'_{j+1} + \langle \bsc'_1, \dots, \bsc'_j \rangle.
\]
In particular, using the induction assumption of \eqref{eq:0m3-structure-12}, we have 
\[
\langle \bsc_1, \dots, \bsc_{j+1} \rangle = \langle \bsc'_1, \dots, \bsc'_{j+1} \rangle
\qquad \text{and} \qquad
\bsc'_{j+1} \in \bsc_{j+1} + \langle \bsc_1, \dots, \bsc_j \rangle.
\]
This shows \eqref{eq:0m3-structure-1} and \eqref{eq:0m3-structure-12} for $j+1$.
This implies
\begin{align*}
V_{i,j+1}
&= \langle \bsa_1, \dots, \bsa_i, \bsb_1, \dots, \bsb_{m-i-j-2}, \bsc_1, \dots, \bsc_{j+1} \rangle\\
&= \langle \bsa_1, \dots, \bsa_i, \bsb_1, \dots, \bsb_{m-i-j-2}, \bsc'_1, \dots, \bsc'_{j+1} \rangle
= W_{i,j+1},
\end{align*}
which shows \eqref{eq:0m3-structure-2} for $j+1$.
\end{proof}

Now Lemma~\ref{lem:0m3net-structure} is easy to show:
Lemma~\ref{lem:0m3-structure} \eqref{eq:0m3-structure-1}
directly implies that there exists $L \in \Lset$ such that  $LC = C'$.

\bibliographystyle{plain}
\bibliography{t0ord3}

\end{document}